\let\stdthebibliography\thebibliography
\let\stdendthebibliography\endthebibliography
\author{Paul Pollack} 
\address{Department of Mathematics \\ University of Georgia \\ Athens, GA 30602}
\email{pollack@uga.edu}
\thanks{P.P. is supported by NSF award DMS-2001581.}
\author{Lee Troupe} 
\address{Department of Mathematics \\ Mercer University \\ 1501 Mercer University Drive \\ Macon, GA 31207}
\email{troupe\_lt@mercer.edu}
\subjclass{Primary 11N37; Secondary 11N56, 11N64}
\newcommand\E{\mathbb{E}}
\renewcommand\phi\varphi
\renewcommand{\pod}[1]{\allowbreak\mathchoice
  {\if@display \mkern 18mu\else \mkern 8mu\fi (#1)}
  {\if@display \mkern 18mu\else \mkern 8mu\fi (#1)}
  {\mkern4mu(#1)}
  {\mkern4mu(#1)}
}
\DeclareMathAlphabet{\curly}{U}{rsfs}{m}{n}
\newtheorem{thm}{Theorem}
\newtheorem{prop}[thm]{Proposition}
\newtheorem{lem}[thm]{Lemma}
\theoremstyle{remark}
\newtheorem*{rmk}{Remark}
\begin{document}
\title[Sums of proper divisors follow Erd\H{o}s--Kac]{Sums of proper divisors follow the Erd\H{o}s--Kac law}
\begin{abstract} Let $s(n)=\sum_{d\mid n,~d<n} d$ denote the sum  of the proper divisors of $n$. The second-named author proved that $\omega(s(n))$ has normal order $\log\log{n}$, the analogue for $s$-values of a classical result of Hardy and Ramanujan. We establish the corresponding Erd\H{o}s--Kac theorem:  $\omega(s(n))$ is asymptotically normally distributed with mean and variance $\log\log{n}$. The same method applies with $s(n)$ replaced by any of several other unconventional arithmetic functions, such as $\beta(n):=\sum_{p\mid n} p$, $n-\phi(n)$, and $n+\tau(n)$ ($\tau$ being the divisor function). 
\end{abstract}
\maketitle
 
\section{Introduction}
Let $s(n)=\sum_{d\mid n,~d<n} d$ denote the sum of the proper divisors of the positive integer $n$, so that $s(n) = \sigma(n)-n$. Interest in the value distribution of $s(n)$ traces back to the ancient Greeks, but the modern study of $s(n)$ could be considered to begin with Davenport \cite{davenport33}, who showed that $s(n)/n$ has a continuous distribution function $D(u)$. Precisely: For each real number $u$, the set of $n$ with $s(n) \le un$ has an asymptotic density $D(u)$ which varies continuously with $u$. Moreover, $D(0)=0$ and $\lim_{u\to\infty} D(u)=1$.

While the values of $\sigma(n) = \prod_{p^e\parallel n} \frac{p^{e+1}-1}{p-1}$ are multiplicatively special, we expect shifting by $-n$ to rub out the peculiarities. That is, we expect the multiplicative statistics of $s(n)$ to resemble those of numbers of comparable size. By Davenport's theorem, it is usually safe to interpret ``of comparable size'' to mean ``of the same order of magnitude as $n$ itself''. 

Various results in the literature validate this expectation about $s(n)$. For example, the first author has shown \cite{pollack14} that $s(n)$ is prime for $O(x/\log{x})$ values of $n\le x$ (and he conjectures that the true count is $\sim x/\log x$, in analogy with the prime number theorem). The second author \cite{troupe20} has proved, in analogy with a classical result of Landau and Ramanujan, that there are $\asymp x/\sqrt{\log{x}}$ values of $n\le x$ for which $s(n)$ is a sum of two squares. Writing $\omega(n)$ for the number of distinct prime factors of $n$, he also showed \cite{troupe15} that $\omega(s(n))$ has normal order $\log\log{n}$. This is in harmony with the classical theorem of Hardy and Ramanujan \cite{HR00} that $\omega(n)$ itself has normal order $\log\log{n}$. 

In this note, we pick back up the study of $\omega(s(n))$. Strengthening the result of \cite{troupe15}, we prove that $\omega(s(n))$ satisfies the conclusion of the Erd\H{o}s--Kac theorem \cite{EK40}.

\begin{thm}\label{thm:main} Fix a real number $u$. As $x\to\infty$, 
\[ \frac{1}{x}\#\{1 < n\le x: \omega(s(n)) - \log\log{x} \le u\sqrt{\log\log{x}}\} \to \frac{1}{\sqrt{2\pi}} \int_{-\infty}^{u} \mathrm{e}^{-\frac{1}{2}t^2}\, \mathrm{d}t.\]
\end{thm}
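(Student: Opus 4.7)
The plan is to prove Theorem~\ref{thm:main} by the method of moments, extending the Tur\'an--Kubilius argument that underlies the normal-order result of \cite{troupe15}. Fix an integer $k\ge 1$ and a slowly growing truncation $y=y(x)$; for concreteness take $y=x^{1/(\log\log x)^{2}}$, so that $\log\log x-\log\log y = o(\sqrt{\log\log x})$. Set $\omega_{\le y}(m):=\#\{p\le y:p\mid m\}$. A preliminary tail bound --- based on uniform upper bounds $\#\{n\le x:p\mid s(n)\}\ll x/p$ for primes $p$ up to a fixed power of $x$, of the sort already present in \cite{troupe15} --- shows that $\omega(s(n))-\omega_{\le y}(s(n))$ is $o(\sqrt{\log\log x})$ outside an exceptional set of size $o(x)$. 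Hence Theorem~\ref{thm:main} reduces to an Erd\H{o}s--Kac law for $\omega_{\le y}(s(n))$ centered and scaled by $\log\log y$.

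The main analytic input is a uniform joint density estimate: for every squarefree $q$ composed of primes $\le y$ and of size at most an appropriate power of $y$,
\[
\#\{n\le x : q\mid s(n)\} \;=\; (1+o(1))\,\frac{x}{q}.
\]
Granting this, I would expand the $k$-th centered moment $\sum_{n\le x}(\omega_{\le y}(s(n))-\log\log y)^{k}$ by the multinomial theorem, recast each term as a sum over ordered tuples $(p_1,\dots,p_j)$ of primes $\le y$ of $\#\{n\le x:p_1\cdots p_j\mid s(n)\}$, and apply the density estimate. After the standard bookkeeping (isolating diagonal tuples, using $\sum_{p\le y}1/p=\log\log y+O(1)$, and collecting off-diagonal contributions), the moment equals $\mu_k\,x\,(\log\log y)^{k/2}(1+o(1))$, where $\mu_k$ denotes the $k$-th moment of $N(0,1)$. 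The method of moments then yields the theorem.

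The hard part is the density estimate, since $s(n)=\sigma(n)-n$ is not multiplicative in $n$. My proposal is to factor $n=am$, where $a$ is supported on primes $\le z$ for an intermediate cutoff $z$ well below $y$, while $m$ is coprime to all primes $\le z$. Then $\sigma(n)=\sigma(a)\sigma(m)$, so
\[
s(n) \;\equiv\; \sigma(a)\sigma(m)-am \pmod{q}.
\]
Conditioning on $a$, and using that $m\bmod q$ equidistributes over rough integers (via Siegel--Walfisz or Brun--Titchmarsh in arithmetic progressions, with sieving for the coprimality to primes $\le z$), it remains to show that $\sigma(m)\bmod q$ equidistributes as $m$ ranges over rough integers in a fixed residue class modulo $q$. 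I would detect this by orthogonality over characters of $(\mathbb{Z}/q\mathbb{Z})^{\times}$: for each non-principal $\chi$, the twisted sum $\sum_m \chi(\sigma(m))$ should exhibit cancellation, estimable by the Selberg--Delange method applied to the twisted Dirichlet series $\sum_m \chi(\sigma(m))\,m^{-s}$, whose Euler factors at primes $p>z$ differ from the trivial ones by a quantity which, when averaged over $p$, pushes the singularity of the series to the left of $s=1$ and yields a saving for $\chi$ non-principal. Obtaining this cancellation \emph{uniformly} in $q$, and choosing the parameters $y$, $z$, and the admissible range of $q$ compatibly, will be the main technical obstacle.
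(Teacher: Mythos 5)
Your overall skeleton (method of moments, reduced to a density estimate for $\#\{n\le x: q\mid s(n)\}$ with $q$ squarefree and $\omega(q)\le k$) is the same shape as the paper's argument, but the key input as you have stated it is too weak, and the route you propose for proving it would not deliver what is needed. Quantitatively: in the $k$-th centered moment the individual terms have size $x(\log\log y)^{k}$ and cancel down to a main term of size $x(\log\log y)^{k/2}$, so the \emph{total} error over all admissible moduli $q$ must be $o\bigl(x(\log\log y)^{k/2}\bigr)$. A uniform estimate $\#\{n\le x:q\mid s(n)\}=(1+o(1))x/q$ only bounds the accumulated error by $o\bigl(x(\log\log y)^{k}\bigr)$, which swamps the main term already for $k=2$. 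What is actually required is a rate, e.g.\ errors saving a power of $\log x$ on average over $q$; the paper proves precisely such a bound (the sum over $d$ of $\bigl|\frac{1}{|\Omega|}\#\{n\in\Omega:d\mid s(n)\}-\frac1d\bigr|$ is $\ll(\log\log x)^{O(1)}/\log x$), and this is where essentially all of the work resides. You have deferred the entire difficulty into the phrase ``the main technical obstacle.''

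Moreover, the proposed proof of the density estimate is not viable at the required uniformity. With your truncation $y=x^{1/(\log\log x)^2}$, the moduli $q$ range up to $x^{ck/(\log\log x)^2}$; Selberg--Delange or Siegel--Walfisz treatments of $\sum_m\chi(\sigma(m))m^{-s}$ give savings only for moduli up to a fixed power of $\log x$ (with Siegel-zero caveats), and nothing in your plan averages over $q$ to compensate for individual moduli being out of reach. In addition, for $q$ with small prime factors the event $q\mid s(n)$ is badly entangled with $q\mid n$: $\chi(\sigma(m))$ vanishes whenever $\gcd(\sigma(m),q)>1$, which happens for a positive proportion of $m$, so ``equidistribution of $\sigma(m)\bmod q$'' is not the right statement and the degenerate cases need a dedicated argument (these are exactly the ``$d$-compatible but not $d$-ideal'' $m$ the paper treats separately). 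The paper sidesteps both problems by (i) running the moment comparison only over primes $p\in((\log x)^2,\,x^{1/\log_3 x}]$, handling smaller primes by an upper bound ($\mathbb{E}[X^{(s)}]\ll\log_3 x\log_4 x$) and larger primes trivially, and (ii) splitting off $P=P^{+}(n)$, so that $d\mid s(n)$ becomes the linear congruence $Ps(m)\equiv-\sigma(m)\pmod d$ in the prime $P$; the errors are then controlled \emph{on average over $d$} by Bombieri--Vinogradov, together with the Banks--Harman--Shparlinski device of comparing against the total, unestimated, number of primes in the interval $(L_m,x/m]$. This maneuver is the missing idea in your proposal. (A smaller inaccuracy: the uniform bound $\#\{n\le x:p\mid s(n)\}\ll x/p$ cannot hold for $p$ as large as a fixed power of $x$ beyond roughly $\sqrt{x}$, where it would force near-empty solution sets; this is harmless, since integers $\le x^2$ have $O(1)$ prime factors above $x^{\varepsilon}$, but as stated your tail reduction is not correct.)
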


To prove Theorem \ref{thm:main}, we adapt a simple and elegant proof of the   Erd\H{o}s--Kac theorem due to Billingsley (\cite{billingsley69}, or \cite[pp.\ 395--397]{billingsley95}). Making this go requires estimating, for squarefree $d$, the number of $n\le x$ for which $d\mid s(n)$. A natural attack on this latter problem is to break off the largest prime factor $P$ of $n$, say $n=mP$. Most of the time, $P$ does not divide $m$, so that $\sigma(n)=\sigma(m)(P+1)$. Then asking for $d$ to divide $s(n)$ amounts to imposing the congruence $s(m) P\equiv -\sigma(m)\pmod{d}$. For a given $m$, the corresponding $P$ are precisely those that lie in a certain interval $I_m$ and a certain (possibly empty) set of arithmetic progressions. At this point we adopt (and adapt) a strategy of Banks, Harman, and Shparlinski \cite{BHS05}. Rather than analytically estimate the number of these $P$, we relate the count of such $P$ back to the total number of primes in the interval $I_m$, which we leave unestimated! This allows one to avoid certain losses of precision in comparison with \cite{troupe15}. A similar strategy was used recently in \cite{LPR21}  to show that $s(n)$, for composite  $n\le x$, is asymptotically uniformly distributed modulo primes $p\le (\log{x})^A$ (with $A>0$ arbitrary but fixed).

Our proof of Theorem \ref{thm:main} is fairly robust. In the final section, we describe the modifications necessary to prove the corresponding result with $s(n)$ replaced by $\beta(n):= \sum_{p\mid n} p$, $n-\phi(n)$, or $n+\tau(n)$, where $\tau(n)$ is the usual divisor-counting function.

For other recent work on the value distribution of $s(n)$, see  \cite{LP15,PP16, pomerance18, PPT18}.

\subsection*{Notation} Throughout, the letters $p$ and $P$ are reserved for primes. We write $(a,b)$ for the greatest common divisor of $a, b$. We let $P^{+}(n)$ denote the largest prime factor of $n$, with the convention that $P^{+}(1)=1$. We write $\log_k$ for the $k$th iterate of the natural logarithm. We use $\mathbb{E}$ for expectation and $\mathbb{V}$ for variance. 

\section{Outline} We let $x$ be a large real number and we work on the probability space
\[ \Omega:= \{n \le x: n\text{ composite}, P^{+}(n) > x^{1/\log_4 x}, \text{ and } P^{+}(n)^2\nmid n\}, \]
equipped with    the uniform measure. Standard arguments (compare with the proof of Lemma 2.2 in \cite{troupe15}) show that as $x\to\infty$,
\[ \#\Omega = (1+o(1)) x. \]

We let $y=(\log{x})^2$ and $z=x^{1/\log_3{x}}$, and we define
\[ \mathcal{P} = \{\text{primes $p$ with $y < p \le z$}\}. \]

For each prime $p\le x^2$, we introduce the random variable $X_p$ on $\Omega$ defined by
\[ X_p(n) =
\begin{cases} 1 & \text{if $p\mid s(n)$}, \\
0 &\text{otherwise}.
\end{cases}
\]
We let $Y_p$ be Bernoulli random variables which take the value $1$ with probability $1/p$. We define
\[ X = \sum_{p \in \mathcal{P}} X_p\quad\text{and}\quad Y = \sum_{p \in \mathcal{P}} Y_p; \]
we think of $Y$ as an idealized model of $X$.

Observe that 
\begin{align} \mu:=\mathbb{E}[Y] = \sum_{p \in \mathcal{P}} \frac{1}{p} &= \log\log{z} - \log\log{y} + o(1)\notag \\
&= \log\log{x} + o(\sqrt{\log\log{x}})\label{eq:muest} \end{align}
while
\begin{equation}\label{eq:sigmaest} \sigma^2:= \mathbb{V}[Y] =\sum_{p \in \mathcal{P}} \frac{1}{p}\left(1-\frac{1}{p}\right) \sim \log\log{x}. \end{equation}
We renormalize $Y$ to have mean $0$ and variance $1$ by defining
\[ \tilde{Y} = \frac{Y-\mu}{\sigma}. \]

\begin{lem}\label{lem:probability}  $\tilde{Y}$ converges in distribution to the standard normal $\mathcal{N}$, as $x\to\infty$. Moreover, $\mathbb{E}[\tilde{Y}^k] \to \mathbb{E}[\mathcal{N}^k]$ for each fixed positive integer $k$. 
\end{lem}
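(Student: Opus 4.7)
The plan is to apply the Lindeberg--L\'{e}vy central limit theorem to the sum $Y = \sum_{p \in \mathcal{P}} Y_p$ of \emph{independent} Bernoulli variables, and then to upgrade convergence in distribution to convergence of all moments by a uniform integrability argument.

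First I would verify Lindeberg's condition for the triangular array $\{(Y_p - 1/p)/\sigma : p \in \mathcal{P}\}$. The summands are bounded: $|Y_p - 1/p| \le 1$ for every $p$. By \eqref{eq:sigmaest}, $\sigma^2 \sim \log\log x$, so $\sigma \to \infty$. Hence for any fixed $\epsilon > 0$ and all $x$ sufficiently large, $|Y_p - 1/p| \le \epsilon \sigma$ uniformly in $p \in \mathcal{P}$, and the Lindeberg sum
\[ \frac{1}{\sigma^2} \sum_{p \in \mathcal{P}} \mathbb{E}\!\left[(Y_p - 1/p)^2 \mathbf{1}_{\{|Y_p - 1/p|>\epsilon\sigma\}}\right] \]
is identically $0$. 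The Lindeberg--L\'{e}vy theorem then yields the distributional convergence $\tilde{Y} \to \mathcal{N}$.

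To obtain convergence of the $k$th moments, I would show that for each fixed positive integer $k$ there is a constant $C_k$ with $\mathbb{E}[\tilde{Y}^{2k}] \le C_k$ uniformly in $x$. Given such a bound, $\{\tilde{Y}^k\}_x$ is uniformly integrable and convergence in distribution upgrades to convergence of the $k$th moment, for both even and odd $k$. To establish the bound, expand
\[ \mathbb{E}[(Y-\mu)^{2k}] = \sum_{p_1, \ldots, p_{2k} \in \mathcal{P}} \mathbb{E}\!\left[\prod_{i=1}^{2k}(Y_{p_i} - 1/p_i)\right]. \]
Independence of the $Y_p$ combined with $\mathbb{E}[Y_p - 1/p] = 0$ means only tuples in which every prime that appears does so at least twice contribute. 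Grouping by repetition pattern and using $|\mathbb{E}[(Y_p - 1/p)^m]| \le 1/p$ for $m \ge 2$ along with $\sum_{p \in \mathcal{P}} 1/p = \sigma^2 + O(1)$, the contribution of patterns involving exactly $j$ distinct primes is $O_k(\sigma^{2j})$; since $j \le k$, this is $O_k(\sigma^{2k})$. Dividing by $\sigma^{2k}$ delivers the required uniform bound.

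The whole argument is standard, so there is no serious obstacle; the one point deserving care is the moment bookkeeping in the last step, but this can be replaced entirely by a black-box moment inequality for sums of independent bounded centered random variables (Rosenthal's inequality, or Marcinkiewicz--Zygmund). One could alternatively forgo Lindeberg--L\'{e}vy and prove both statements at once by the method of moments, showing directly that the leading-order contribution to $\mathbb{E}[\tilde{Y}^{2k}]$ comes from perfect pairings and equals $(2k-1)!! + o(1)$, while the odd moments are $o(1)$; I prefer the route above as it cleanly separates the probabilistic CLT from the moment-boundedness estimate.
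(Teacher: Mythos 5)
Your argument is correct, but it is organized differently from the paper's. The paper proves both assertions in one stroke by citing Billingsley's moment-method proof of the CLT (pp.\ 391--392 of the 1995 book): for independent, mean-zero, uniformly bounded summands with total variance tending to infinity, one shows directly that $\mathbb{E}[\tilde{Y}^k]\to\mathbb{E}[\mathcal{N}^k]$ for every $k$ (the pairing terms dominate), and distributional convergence then follows because the normal law is determined by its moments. You instead split the lemma: distributional convergence from the Lindeberg condition (trivially verified here since $|Y_p-1/p|\le 1$ while $\sigma\to\infty$), and moment convergence from a uniform bound on $\mathbb{E}[\tilde{Y}^{2k}]$ plus uniform integrability. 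Your moment bookkeeping is sound --- only patterns in which each distinct prime occurs at least twice survive, $|\mathbb{E}[(Y_p-1/p)^m]|\le 1/p$ for $m\ge 2$, and $\sum_{p\in\mathcal{P}}1/p=\sigma^2+O(1)$, giving $\mathbb{E}[(Y-\mu)^{2k}]\ll_k\sigma^{2k}$ --- and bounded $2k$th moments do upgrade convergence in distribution to convergence of the $k$th moment. What your route buys is modularity: you only need an \emph{upper bound} on the even moments rather than their asymptotics, and the bound can even be outsourced to Rosenthal's inequality; what the paper's route buys is economy, since a single citation yields both conclusions and no separate uniform-integrability step is needed. One naming quibble: the theorem you want is the Lindeberg--Feller theorem for triangular arrays (the variables here are neither identically distributed nor fixed as $x$ grows), not the Lindeberg--L\'evy theorem, which is the i.i.d.\ statement; your verification of Lindeberg's condition is for the array, so the substance is unaffected.
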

\begin{proof}[Proof {\rm(}sketch{\rm)}] Both claims follow from the proof in \cite[pp. 391--392]{billingsley95} of the central limit theorem through the method of moments. One needs only that the recentered variables $Y_p':=Y_p-\frac{1}{p}$, for $p \in \mathcal{P}$, are independent mean 0 variables of finite variance, bounded by $1$ in absolute value, with  $\sum_{p \in \mathcal{P}} \mathbb{V}[Y_p'] \to\infty$ as $x\to\infty$. (Note that $\sum_{p \in \mathcal{P}} \mathbb{V}[Y_p'] = \sum_{p \in \mathcal{P}} \mathbb{V}[Y_p] = \mathbb{V}[Y] = \sigma^2$ in our above notation.)
\end{proof}

Let $X = \sum_{p \in \mathcal{P}} X_p$ and $\tilde{X} = \frac{X-\mu}{\sigma}$. The next section is devoted to the proof of the following proposition. 

\begin{prop}\label{prop:correctmoments} For each fixed positive integer $k$, 
\[ \E[\tilde{X}^k] - \E[\tilde{Y}^k] \to 0. \]
\end{prop}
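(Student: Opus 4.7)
The plan is to adopt the method of moments, leveraging that each $X_p,Y_p\in\{0,1\}$ so that products of indicators are themselves indicators of intersections. First I would binomially expand
\[ \E[\tilde{X}^k] = \sigma^{-k}\sum_{j=0}^{k}\binom{k}{j}(-\mu)^{k-j}\E[X^j], \]
and analogously for $\tilde{Y}$. Using $X_p^2=X_p$ to collapse repeated factors in $X^j=(\sum_{p\in\mathcal{P}} X_p)^j$, one has
\[ X^j = \sum_{l=1}^{j} l!\,S(j,l) \sum_{\substack{T\subseteq \mathcal{P}\\ |T|=l}} \prod_{p\in T} X_p, \]
where $S(j,l)$ is a Stirling number of the second kind; the same identity holds with $Y$ in place of $X$. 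Subtracting,
\[ \E[\tilde{X}^k] - \E[\tilde{Y}^k] = \sigma^{-k}\sum_{j=0}^{k}\binom{k}{j}(-\mu)^{k-j}\sum_{l=1}^{j}l!\,S(j,l)\sum_{\substack{T\subseteq\mathcal{P}\\|T|=l}} \Delta(T), \]
where $\Delta(T):=\E[\prod_{p\in T} X_p] - \E[\prod_{p\in T} Y_p]$. By independence of the $Y_p$, writing $d_T:=\prod_{p\in T} p$, one has $\E[\prod_{p\in T} Y_p]=1/d_T$, while $\E[\prod_{p\in T} X_p]=\#\{n\in\Omega: d_T\mid s(n)\}/\#\Omega$. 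Everything thus reduces to a sufficiently uniform estimate for $\#\{n\in\Omega: d\mid s(n)\}$ in a squarefree modulus $d$.

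The central estimate I would establish is that for every squarefree $d$ all of whose prime factors lie in $\mathcal{P}$ with $\omega(d)\le k$ (so $d\le z^k=x^{k/\log_3 x}$),
\[ \#\{n\in\Omega: d\mid s(n)\} = \frac{\#\Omega}{d} + R(d), \]
with the errors $R(d)$ acceptably small when summed over all such $d$. Following the strategy outlined in the Introduction, I would factor $n=mP$ where $P=P^{+}(n)$; since $P^2\nmid n$, we have $\sigma(n)=\sigma(m)(P+1)$, whence
\[ s(n)=s(m)P+\sigma(m). \]
Then $d\mid s(n)$ is equivalent to $s(m)P\equiv -\sigma(m)\pmod{d}$, which (for fixed $m$) either has no solutions or confines $P$ to a union of residue classes modulo $d/(s(m),d)$. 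Rather than estimate analytically the count of these $P$ in the admissible interval $I_m:=(\max(P^{+}(m), x^{1/\log_4 x}), x/m]$, I would adopt the Banks--Harman--Shparlinski device of expressing it as a proportion of the \emph{total} count of primes in $I_m$, which is then left unestimated; after summation over $m$ the main term $\#\Omega/d$ should emerge automatically, with no appeal to Siegel--Walfisz or the like.

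Feeding this back into the formula for $\E[\tilde{X}^k]-\E[\tilde{Y}^k]$, the main terms $1/d_T$ cancel inside each $\Delta(T)$ and only the aggregate of $R(d_T)/\#\Omega$ remains, weighted by powers of $\mu\sim\log\log x$ and by $\sigma^{-k}\asymp (\log\log x)^{-k/2}$. The main obstacle I anticipate is controlling $R(d)$ uniformly in $d$: for certain $m$ the effective modulus $d/(s(m),d)$ can be much smaller than $d$, and one must separately discard the ``bad'' $m$ for which $s(m)$ shares a sizable factor with $d$, or for which the congruence on $P$ has no solution at all. Ensuring that, after summing over $T$, the resulting error is of size $o((\log\log x)^{j-k/2})$ for each $j\le k$ requires exploiting both that $\omega(d)\le k$ is bounded and that $d$ is only a tiny power of $x$; this is the technical content of the next section.
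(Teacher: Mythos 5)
Your reduction is sound and is the same as the paper's: expand binomially, use $X_p^2=X_p$ and independence of the $Y_p$ to reduce everything to bounding, uniformly over squarefree $d$ with at most $k$ prime factors from $\mathcal{P}$, the discrepancy $\bigl|\#\{n\in\Omega: d\mid s(n)\}/\#\Omega - 1/d\bigr|$ summed over $d$, and then attack that count via $n=mP$, $s(n)=s(m)P+\sigma(m)$, with the Banks--Harman--Shparlinski device reassembling the main term $\#\Omega/d$ from the unestimated counts of primes in the intervals $I_m$. But at that point your proposal stops: the entire content of the proposition is the bound on $\sum_d |R(d)|$ (one needs roughly $\ll (\log_2 x)^{O(1)}/\log x$ after dividing by $\#\Omega$, since the weights $\mu^{k-j}\sigma^{-k}$ only cost a bounded power of $\log\log x$), and you defer this to ``the next section'' without supplying the arguments that make it work. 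As written, this is a gap, not a proof.

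Concretely, three ingredients are missing, and one of your claims is off. First, for the $d$-ideal $m$ (those with $(d,s(m)\sigma(m))=1$) the BHS trick removes the need to estimate $\pi(I_m)$, but it does \emph{not} remove the need for equidistribution in progressions: the error is $\sum_{m<x/L} E(x/m;d)$, and since $d$ ranges up to $x^{O(1/\log_3 x)}$ — far beyond any fixed power of $\log x$ — Siegel--Walfisz is useless and one genuinely needs Bombieri--Vinogradov on average over $d$, contrary to your remark that no such input is required. Second, for $m$ that are $d$-compatible but not $d$-ideal, Brun--Titchmarsh leaves you with the weight $(d,s(m))/md$, and to beat the factor $d'=(d,s(m))$ you need the arithmetic observation that $d'\mid \sigma(m)$ and $d'\mid \sigma(m)-s(m)=m$; writing $m=d'm'$, the condition $d'\mid\sigma(m)$ then forces $m'$ either to have squarefull part exceeding $y/2$ or to have a prime factor $q\equiv -1\pmod{P^{+}(d')}$ with $P^{+}(d')>y$ — both rare events, yielding the crucial saving $\ll 1/d'$. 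Your phrase ``separately discard the bad $m$'' does not engage with this; without it the sum over such $m$ is too large. Third, the $m$ that fail to be $d$-ideal must also be removed from the main term, which the paper controls using $\omega(s(m)\sigma(m))\ll\log x$ together with the fact that every $p\in\mathcal{P}$ exceeds $y=(\log x)^2$. Supplying these three estimates is exactly the work the proposition requires.
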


Lemma \ref{lem:probability} and Proposition \ref{prop:correctmoments} imply that $\E[\tilde{X}^k] \to \E[\mathcal{N}^k]$, for each $k$. So by the  method of moments \cite[Theorem 30.2, p. 390]{billingsley95}, $\tilde{X} = \frac{X-\mu}{\sigma}$ converges in distribution to the standard normal. 

This is most of the way towards  Theorem \ref{thm:main}. Since $\#\Omega = (1+o(1)) x$ and $\mu, \sigma$ satisfy the estimates \eqref{eq:muest}, \eqref{eq:sigmaest}, Theorem \ref{thm:main} will follow if we show that $\frac{\omega(s(\cdot)) - \mu}{\sigma}$ (viewed as a random variable on $\Omega$) converges in distribution to the standard normal. Observe that $s(n) \le \sum_{d < n} d < n^2 \le x^2$ for every $n \le x$. So defining $X^{(s)} = \sum_{p \le y} X_p$ and $X^{(l)} = \sum_{z < p \le x^2} X_p$, we have $\omega(s(.)) = X^{(s)} + X + X^{(l)}$ on $\Omega$ and 
\[ \frac{\omega(s(\cdot)) - \mu}{\sigma} = \tilde{X} + \frac{X^{(s)}}{\sigma} + \frac{X^{(\ell)}}{\sigma}. \]
Since $\tilde{X}$ converges to the standard normal, to complete the proof of Theorem \ref{thm:main} it suffices to show that $\frac{X^{(s)}}{\sigma}$ and $\frac{X^{(\ell)}}{\sigma}$ converge to $0$ in probability (see \cite[Theorem 25.4, p.\ 332]{billingsley95}). Convergence to $0$ in probability is obvious for $X^{(\ell)}/\sigma$: A positive integer not exceeding $x^2$ has at most $\frac{\log{(x^2)}}{\log{z}} = 2\log_3 x$ prime exceeding $z$, so that 
\[ |X^{(\ell)}/\sigma| \le 2\log_3{x}/\sigma = o(1) \]
on the entire space $\Omega$. Since $\sigma \sim \sqrt{\log\log{x}}$, that $X^{(s)}/\sigma$ tends to $0$ in probability follows from the next lemma together with Markov's inequality.

\begin{lem}\label{lem:smallprimes} $\E[X^{(s)}] \ll \log_3{x} \log_4 x$ for all large $x$.
\end{lem}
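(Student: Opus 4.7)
The plan is to bound $\E[X^{(s)}] = (\#\Omega)^{-1} \sum_{p \le y} N_p$, where $N_p := \#\{n \in \Omega : p \mid s(n)\}$, by directly estimating each $N_p$ via Brun--Titchmarsh. Every $n \in \Omega$ factors uniquely as $n = mP$ with $P = P^+(n) > x^{1/\log_4 x}$ and $P \nmid m$. Since $P \nmid m$, $\sigma(n) = \sigma(m)(P+1)$, whence
\[ s(n) = P\, s(m) + \sigma(m). \]
Thus $p \mid s(n)$ if and only if $P\, s(m) \equiv -\sigma(m) \pmod{p}$, and I would split the count according to whether or not $p \mid s(m)$.

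If $p \mid s(m)$, the congruence forces $p \mid \sigma(m)$, and combined with $s(m) = \sigma(m) - m$ this gives $p \mid m$; since $p \le y < P$, we then have $p \mid n$. So this case contributes at most $\#\{n \le x : p \mid n\} \le x/p$ to $N_p$.

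If $p \nmid s(m)$, then $P$ is confined to a single residue class $a_m \pmod{p}$. The lower bound $P > x^{1/\log_4 x}$ dwarfs $p \le (\log x)^2$, so Brun--Titchmarsh applies comfortably:
\[ \#\{P \le x/m : P \equiv a_m \pmod{p}\} \ll \frac{x/m}{p \log(x/(mp))} \ll \frac{x \log_4 x}{mp \log x}. \]
Summing over $m \le x^{1-1/\log_4 x}$ introduces an additional $\log x$, so this case's contribution to $N_p$ is $\ll x \log_4 x / p$.

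Combining the two cases and summing over $p \le y$, the bound $\sum_{p \le y} 1/p = \log_3 x + O(1)$ yields $\sum_{p \le y} N_p \ll x \log_3 x \log_4 x$; dividing by $\#\Omega = (1+o(1))x$ gives the lemma. The only real (but minor) obstacle is to verify that Brun--Titchmarsh applies uniformly in $m$ and $p$, which the generous gap between $P$ and $p$ makes automatic. The side condition $P \nmid m$ excludes at most $\omega(m) \ll \log x$ primes per $m$ and is harmless at this level of precision.
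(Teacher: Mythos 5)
Your proof is correct and follows essentially the same route as the paper: decompose $n=mP$ with $P=P^{+}(n)$, use $s(mP)=Ps(m)+\sigma(m)$, split according to whether $p\mid s(m)$, apply Brun--Titchmarsh in the generic case, and note that the degenerate case forces $p\mid m$. Your handling of that degenerate case (bounding it directly by $\#\{n\le x: p\mid n\}\le x/p$ rather than summing over $m$ with $p\mid m$) is a slight simplification of the paper's computation but not a different argument.
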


\begin{proof} Put $L=x^{1/\log_4 x}$, and for each $m\le x$, let $L_m = \max\{x^{1/\log_4 x}, P^{+}(m)\}$. The $n$ belonging to $\Omega$ are precisely the positive integers $n$ that admit a decomposition $n=mP$, where $m > 1$ and $L_m < P \le x/m$. Note that this decomposition of $n$ is unique whenever it exists, since one can recover $P$ from $n$ as $P^{+}(n)$. 

Let $n \in \Omega$ and write $n=mP$ as above. Then $s(mP) = \sigma(m)(P+1)-mP = Ps(m)+\sigma(m)$. Hence, for each $p \le y$,  
\[ \sum_{n \in \Omega} X_p(n) = \sum_{\substack{n \in \Omega \\ p \mid s(n)}} 1 = \sum_{1 < m < x/L} \sum_{\substack{L_m < P \le x/m \\ Ps(m) \equiv - \sigma(m)\pmod{p}}} 1. \]
If $p\nmid s(m)$, then the congruence $Ps(m)\equiv -\sigma(m)\pmod{p}$ puts $P$ in a determined  congruence class mod $p$ (possibly $0\bmod p$). By Brun--Titchmarsh, the number of such $P \le x/m$ is 
\[ \ll \frac{x}{mp \log(x/mp)} \ll \frac{x\log_4 x}{mp\log{x}}.\]
(We use here that $x/mp > L/p > L^{1/2}$ and $\log(L^{1/2}) \gg \log{x}/\log_4 x$.) On the other hand, if $p\mid s(m)$, then the congruence $Ps(m)\equiv -\sigma(m)\pmod{p}$ has integer solutions $P$ only when $p\mid \sigma(m)$, in which case $p\mid \sigma(m)-s(m)=m$. In that scenario, every prime $P$ satisfies $Ps(m)\equiv -\sigma(m)\pmod{p}$. Since there are $\ll \frac{x \log_4 x}{m\log{x}}$  primes $P\le x/m$, we conclude that 
\[ \sum_{1 < m < x/L} \sum_{\substack{L_m < P \le x/m \\ Ps(m) \equiv - \sigma(m)\pmod{p}}} 1 \ll \sum_{\substack{m \le x \\ p\mid m}} \frac{x\log_4{x}}{m\log{x}} + \sum_{m\le x} \frac{x\log_4 x}{mp\log{x}} \ll \frac{x\log_4{x}}{p}. \]
Keeping in mind that $|\Omega| \sim x$,
\[ \E[X^{(s)}] \ll \frac{1}{x} \sum_{p \le y} \frac{x\log_4{x}}{p} \ll \log_4 x \log_2 y \ll \log_4 x\log_3 x. \qedhere\]
\end{proof}

\section{Completion of the proof of Theorem \ref{thm:main}: Proof of Proposition \ref{prop:correctmoments}}
Throughout this section, $k$ is a fixed positive integer. All estimates are to be understood as holding for $x$ large enough, allowed to depend on $k$, and implied constants in Big-oh relations and $\ll$ symbols may depend on $k$.

Recalling the definitions of $\tilde{X}$, $\tilde{Y}$ and expanding,
\begin{align*} \E[\tilde{X}^k] - \E[\tilde{Y}^k] &= \frac{1}{\sigma^k} \sum_{j=1}^{k}\binom{k}{j}(-\mu)^{k-j} (\E[X^j]-\E[Y^j]) \\ &\ll (\log_2{x})^{O(1)} \sum_{j=1}^{k} |\E[X^j]-\E[Y^j]|.
\end{align*}
For each $j=1,2,\dots,k$,
\[ \E[X^j] - \E[Y^j] = \sum_{p_1,\dots,p_j \in \mathcal{P}} \left(\E[X_{p_1} \cdots X_{p_j}] - \E[Y_{p_1} \cdots Y_{p_j}]\right). \]
Writing $d$ for the product of the distinct primes from the list $p_1,\dots,p_j$, we have $X_{p_1} \cdots X_{p_j} = \prod_{p\mid d} X_p$, $Y_{p_1}\cdots Y_{p_j} = \prod_{p\mid d} Y_p$, and 
\[ \E[X_{p_1} \cdots X_{p_j}] - \E[Y_{p_1} \cdots Y_{p_j}] = \frac{1}{|\Omega|} \sum_{\substack{n \in \Omega \\ d\mid s(n)}} 1 - \frac{1}{d}.  \] 
Observe that given $d$ and $j$, there are only $O(1)$ possibilities for the  original list $p_1,\dots,p_j$. Since there are $O(1)$ possibilities for $j$, we conclude that 
\[ \E[\tilde{X}^k] - \E[\tilde{Y}^k] \ll (\log_2{x})^{O(1)} \sum_{\substack{d \text{ squarefree} \\ p \mid d \Rightarrow p \in \mathcal{P} \\ \omega(d) \le k}} \bigg|\frac{1}{|\Omega|}\sum_{\substack{n \in \Omega \\ d\mid s(n)}} 1 - \frac{1}{d}\bigg|.  \]
We will show that 
\begin{equation}\label{eq:distribution} \sum_{\substack{d \text{ squarefree} \\ p \mid d \Rightarrow p \in \mathcal{P} \\ \omega(d) \le k}} \bigg|\frac{1}{|\Omega|}\sum_{\substack{n \in \Omega \\ d\mid s(n)}} 1 - \frac{1}{d}\bigg| \ll \frac{(\log_2 x)^{O(1)}}{\log{x}}. \end{equation}
Hence, $\E[\tilde{X}^k] - \E[\tilde{Y}^k] \to 0$ as claimed.

Let $d$ be a product of at most $k$ distinct primes from $\mathcal{P}$. It will be useful in subsequent arguments to keep in mind that $d = x^{O(1/\log_3 x)}$, and so is of size $L^{o(1)}$. Decomposing each $n \in \Omega$ in the form $mP$, as in the proof of Lemma \ref{lem:smallprimes}, we see that
\begin{equation}\label{eq:sumddiv} \sum_{\substack{n \in \Omega \\ d\mid s(n)}} 1 = \sum_{1 < m < x/L} \sum_{\substack{L_m < P \le x/m \\ Ps(m) + \sigma(m) \equiv 0\pmod{d}}} 1,\end{equation}
where as before $L= x^{1/\log_4 x}$ and $L_m = \max\{x^{1/\log_4 x}, P^{+}(m)\}$. To analyze the right-hand double sum, we consider various cases for $m$.

Say that $m$ is \textsf{$d$-compatible} if for every prime $p$ dividing $d$, either $p$ divides both $s(m)$ and $\sigma(m)$ or $p$ divides neither. Then $m$ is $d$-compatible precisely when the congruence $us(m) + \sigma(m) \equiv 0\pmod{d}$ has a solution $u$ coprime to $d$; in this case, the primes $P$ with $Ps(m)+\sigma(m)\equiv 0\pmod{d}$ are precisely those belonging to a certain coprime residue class modulo $d/(d,s(m))$. We call $m$ \textsf{$d$-ideal} if $\gcd(d,s(m)\sigma(m))=1$; equivalently, $m$ is $d$-ideal if $m$ is $d$-compatible and $\gcd(d,s(m))=1$. Note that only $d$-compatible values of $m$ contribute to the right side of \eqref{eq:sumddiv}. 

When $m$ is $d$-ideal, 
\[ \sum_{\substack{L_m < P \le x/m \\ Ps(m) + \sigma(m) \equiv 0\pmod{d}}} 1 = \frac{1}{\phi(d)}  \sum_{L_m < P \le x/m} 1  + O(E(x/m;d)),\]
where
\[ E(T;q) := \max_{2\le t \le T} \max_{\gcd(a,q)=1} \left|\pi(t;q,a)- \frac{\pi(t)}{\phi(q)} \right|. \]
So the contribution to the right-hand side of \eqref{eq:sumddiv} from $d$-ideal $m$ is 
\begin{multline} \frac{1}{\phi(d)}\sum_{1 < m < x/L}\sum_{L_m < P \le x/m} 1 - \frac{1}{\phi(d)} \sum_{\substack{1 < m < x/L \\\text{not $d$-ideal}}}\sum_{L_m < P \le x/m}1+ O\left(\sum_{m < x/L} E(x/m; d)\right) \\
= \frac{|\Omega|}{\phi(d)} - \frac{1}{\phi(d)} \sum_{\substack{1 < m < x/L \\\text{not $d$-ideal}}}\sum_{L_m < P \le x/m}1+ O\left(\sum_{m < x/L} E(x/m; d)\right).\label{eq:dideal1}\end{multline}
Since $d$ is a product of $O(1)$ primes all of which exceed $y$, the first main term here admits the estimate \begin{equation}\label{eq:dideal2} \frac{|\Omega|}{\phi(d)} = \frac{|\Omega|}{d}\left(1+O(1/y)\right) = \frac{|\Omega|}{d} + O(x/dy).\end{equation}
We bound the second main term, involving the double sum on $m,P$, from above. The inner sum is no more than $\pi(x/m) \ll \frac{x}{m\log(x/m)} \ll \frac{x\log_4 x}{m\log{x}}$, so that 
\begin{equation}\label{eq:dideal3} \frac{1}{\phi(d)}\sum_{\substack{1 < m < x/L \\\text{not $d$-ideal}}}\sum_{L_m < P \le x/m}1 \ll \frac{x\log_4 x}{\log{x}} \sum_{\substack{1 < m < x/L \\ \text{not $d$-ideal} }}\frac{1}{md}. \end{equation}

Next, we investigate the contribution to the right-hand side of \eqref{eq:sumddiv} from $m$ that are $d$-compatible but not $d$-ideal. For these $m$, the corresponding primes $P$ are restricted to a progression mod $d/(d,s(m))$, and so by the Brun--Titchmarsh inequality these $m$ contribute 
\begin{equation}\label{eq:dcompatnotideal} \ll \sum_{\substack{1 < m < x/L \\ \text{ $d$-compat} \\ \text{not $d$-ideal}}} \frac{x}{m \cdot \phi(d/(d,s(m))) \log(x (d,s(m))/md)} \ll \frac{x\log_4{x}}{\log{x}} \sum_{\substack{1 < m < x/L \\ \text{ $d$-compat} \\ \text{not $d$-ideal}}} \frac{(d,s(m))}{md}. \end{equation}

We derive from \eqref{eq:dideal1}, \eqref{eq:dideal2}, \eqref{eq:dideal3}, and \eqref{eq:dcompatnotideal} that 
\begin{equation*}
\bigg|\frac{1}{|\Omega|}\sum_{\substack{n \in \Omega \\ d\mid s(n)}} 1 - \frac{1}{d}\bigg| \ll \frac{1}{x}\sum_{m< x/L}|E(x/m;d )| + \frac{1}{dy} + \frac{\log_4{x}}{\log{x}}\sum_{\substack{1 < m < x/L \\ \text{not $d$-ideal}}}\frac{1}{md} + \frac{\log_4 x}{\log{x}} \sum_{\substack{1 < m < x/L \\ \text{ $d$-compat} \\ \text{not $d$-ideal}}} \frac{(d,s(m))}{md}.
\end{equation*}
Now we sum on $d$.

First off, the Bombieri--Vinogradov theorem implies that
\begin{align*} \sum_{\substack{d\text{ squarefree} \\p \mid d \Rightarrow p \in \mathcal{P} \\ \omega(d)\le k}} \left(\frac{1}{x} \sum_{m < x/L} |E(x/m; d)|\right) &\le \frac{1}{x}\sum_{m < x/L} \sum_{d \le (x/m)^{1/3}} |E(x/m;d)| \\
&\ll \frac{1}{x} \sum_{m < x/L} \frac{x/m}{(\log{(x/m)})^2} \ll \frac{(\log_4 x)^2}{(\log{x})^2} \sum_{m < x/L}\frac{1}{m} \ll \frac{(\log_4 x)^2}{\log{x}}.
\end{align*}
Next,
\[ \sum_{\substack{d\text{ squarefree} \\p \mid d \Rightarrow p \in \mathcal{P} \\ \omega(d)\le k}} \frac{1}{dy} \le \frac{1}{y} \sum_{j=0}^{k} \frac{1}{j!}\left(\sum_{p \in \mathcal{P}} \frac{1}{p}\right)^j \ll \frac{(\log_2 x)^k}{(\log{x})^2}.  \]
Continuing, note that if $m$ is not $d$-ideal, then there is a prime $p\mid d$ with $p\mid s(m) \sigma(m)$. Hence,
\begin{multline*} \sum_{\substack{d\text{ squarefree} \\p \mid d \Rightarrow p \in \mathcal{P} \\ \omega(d)\le k}} \left(\frac{\log_4 x}{\log x} \sum_{\substack{1 < m < x/L \\ \text{not $d$-ideal}}}\frac{1}{md}\right) \le \frac{\log_4 x}{\log x} \sum_{\substack{d\text{ squarefree} \\p \mid d \Rightarrow p \in \mathcal{P} \\ \omega(d)\le k}} \frac{1}{d} \sum_{p\mid d}  \sum_{\substack{1 < m< x/L \\ p \mid s(m)\sigma(m)}} \frac{1}{m} \\
\le \frac{\log_4 x}{\log x} \sum_{1 < m < x/L}\frac{1}{m} \sum_{\substack{p \mid s(m)\sigma(m) \\ p \in \mathcal{P}}} \sum_{\substack{d \le x\text{ squarefree} \\ p \mid d \\ \omega(d) \le k}} \frac{1}{d} \ll \frac{(\log_2 x)^{O(1)}}{\log{x}} \sum_{1 < m < x/L}\frac{1}{m} \sum_{\substack{p \mid s(m)\sigma(m) \\ p \in \mathcal{P}}}\frac{1}{p}. \end{multline*}
Since each $p \in \mathcal{P}$ exceeds $y$, the final sum on $p$ is $\ll \omega(s(m)\sigma(m))/y \ll \log{x}/y = 1/\log{x}$, and so the last displayed expression is
\[ \ll \frac{(\log_2 x)^{O(1)}}{(\log{x})^2} \sum_{1 < m < x/L}\frac{1}{m} \ll \frac{(\log_2 x)^{O(1)}}{\log{x}}. \]
Finally, suppose $m$ is $d$-compatible but not $d$-ideal. Then $(d,s(m)) > 1$, $(d,s(m)) \mid \sigma(m)$, and $(d,s(m)) \mid \sigma(m)-s(m)=m$. Hence, thinking of $d'$ as $(d,s(m))$, 
\begin{equation} \sum_{\substack{d\text{ squarefree} \\p \mid d \Rightarrow p \in \mathcal{P} \\ \omega(d)\le k}} \Bigg(\frac{\log_4 x}{\log{x}} \sum_{\substack{1 < m < x/L \\ \text{ $d$-compat} \\ \text{not $d$-ideal}}} \frac{(d,s(m))}{md}\Bigg) \le \frac{\log_4 x}{\log{x}} \sum_{\substack{d\text{ squarefree} \\p \mid d \Rightarrow p \in \mathcal{P} \\ \omega(d)\le k}}\frac{1}{d} \sum_{\substack{d' \mid d \\ d'>1}} d' \sum_{\substack{1 < m < x/L \\ d' \mid m,~d'\mid \sigma(m)}}\frac{1}{m}.\label{eq:toinsertback}\end{equation}
Let us estimate the inner sum on $m$. Write $m = d' m'$. The contribution to that sum from $m$ with $(d',m') > 1$ is at most 
\[ \sum_{p\mid d'} \frac{1}{d'}\sum_{\substack{m' < x \\ p \mid m'}} \frac{1}{m'} \ll \frac{1}{d'}\log{x} \sum_{p \mid d'} \frac{1}{p} \ll \frac{\log{x}}{d'y} \omega(d') \ll \frac{1}{d'\log{x}}. \]
Suppose now that $\gcd(d',m')=1$. If $d' \mid \sigma(m)$, then $P^{+}(d') \mid \sigma(d') \sigma(m')$, while $P^{+}(d') > P^{+}(\sigma(d'))$ (since $d'$ is a squarefree product of odd primes and $d'>1$). Thus, $P^{+}(d') \mid \sigma(m')$. Choose a prime power $q^e\parallel m'$ with $P^{+}(d')\mid \sigma(q^e)$. If $e\ge 2$, then $y < P^{+}(d') \le \sigma(q^e) < 2q^e$, and so $m'$ has squarefull part exceeding $y/2$. If $e=1$, then $q\parallel m'$ with $q\equiv -1\pmod{P^{+}(d')}$. Hence, these $m$ make a contribution to the inner sum bounded by
\begin{multline*} \frac{1}{d'}\Bigg(\sum_{\substack{r > y/2 \\ \text{squarefull}}}\sum_{\substack{m' < x \\ r \mid m'}}\frac{1}{m'} + \sum_{p \mid d'} \sum_{\substack{q < x \text{ prime} \\ q\equiv -1\pmod{p}}} \sum_{\substack{m' < x \\ q \mid m'}}\frac{1}{m'}\Bigg) \ll \frac{\log{x}}{d'} \Bigg(\sum_{\substack{r > y/2 \\ \text{squarefull}}}\frac{1}{r} + \sum_{p \mid d'} \sum_{\substack{q < x \text{ prime} \\ q\equiv -1\pmod{p}}}\frac{1}{q} \Bigg)  \\
\ll \frac{\log{x}}{d'} \Bigg(\frac{1}{\log{x}}+ \sum_{p \mid d'} \frac{\log{x}}{p} \Bigg) \ll \frac{1}{d'} + \frac{(\log{x})^2}{d'} \sum_{p \mid d'}\frac{1}{p} \ll \frac{1}{d'} + \frac{(\log{x})^2}{d'} \frac{1}{y} \ll \frac{1}{d'}. \end{multline*}
Inserting these estimates back above, the right-hand side of  \eqref{eq:toinsertback} is seen to be 
\[ \ll \frac{\log_4{x}}{\log x} \sum_{\substack{d\text{ squarefree} \\p \mid d \Rightarrow p \in \mathcal{P} \\ \omega(d)\le k}}\frac{1}{d} \sum_{\substack{d' \mid d \\ d'>1}} 1 \ll  \frac{\log_4{x}}{\log x} \sum_{\substack{d\text{ squarefree} \\p \mid d \Rightarrow p \in \mathcal{P} \\ \omega(d)\le k}}\frac{1}{d} \ll \frac{(\log_2{x})^{O(1)}}{\log{x}}.\]
Assembling the last several estimates yields \eqref{eq:distribution}, which completes the proof of Theorem \ref{thm:main}. 

\begin{rmk} As with most variants of Erd\H{o}s--Kac, Theorem \ref{thm:main} remains valid if we count prime factors with multiplicity. Define $\omega'(n) = \sum_{p^k\parallel n} k$. (We avoid the more familiar notation $\Omega(n)$, since $\Omega$ denotes  our sample space.) It is shown in \cite{troupe15} that, for a certain subset $\Omega'$ of $(1,x]$ containing $(1+o(1))x$ elements,
\[ \frac{1}{x} \sum_{n \in \Omega'(x)} (\omega'(s(n)) - \omega(s(n))) \ll (\log_3{x})^2. \]
(See p.\ 133 of \cite{troupe15}.)
It follows that away from a set of $o(x)$ elements of $(1,x]$, we have $\omega'(s(n)) - \omega(s(n)) < (\log\log{x})^{0.49}$ (say). Hence, the Erd\H{o}s--Kac theorem for $\omega'(s(n))$ is a consequence of the corresponding theorem for $\omega(s(n))$.
\end{rmk}

\section{Other arithmetic functions}

The astute reader will observe that many of the calculations above do not depend on properties specific to $s(n)$. In this section, we discuss how to adapt the previous argument for other arithmetic functions.

Let $f$ be an integer-valued arithmetic function with $f(n)$ nonzero for $n>1$ and $|f(n)| \leq x^{O(1)}$ for all $n \leq x$. Assume that for all positive integers $m$ and all primes $P$ not dividing $m$, there are integers $a(m)$ and $b(m)$ such that $f(mP) = Pa(m) + b(m)$, with  $a(m),b(m)$ nonzero for $m>1$. Finally, assume that $|a(m)|, |b(m)| \leq x^{O(1)}$ for all $1 < m \leq x$. (For $f(n) = s(n)$, we have $0 < s(n) \leq x^2$ when $1 < n \leq x$, and  $s(mP) = Ps(m) + \sigma(m)$ for any positive integer $m$ and any prime $P \nmid m$.) All symbols are defined as in Section 2, except that the random variable $X_p$ is now equal to 1 if $p \mid f(n)$ and is 0 otherwise.

To obtain an Erd{\H o}s--Kac-type result for $\omega(f(n))$, we follow the same general strategy as in the case $f(n) = s(n)$. By the method of moments, Lemma \ref{lem:probability} and the analogue of Proposition \ref{prop:correctmoments} (once shown) will establish that $\tilde{X} = \frac{X-\mu}{\sigma}$ converges in distribution to the standard normal. Recall that $y = (\log x)^2$ and $z = x^{1/\log_3 x}$; then
\[
\frac{\omega(f(\cdot)) - \mu}{\sigma} = \tilde{X} + \frac{X^{(s)}}{\sigma} + \frac{X^{(l)}}{\sigma},
\]
where $X^{(s)} = \sum_{p \leq y} X_p$ and $X^{(l)} = \sum_{z < p \leq x^c} X_p$, where $c > 0$ is a constant such that $|f(n)| \leq x^c$ for all $n \leq x$.

As before, our task is to show that $\frac{X^{(s)}}{\sigma}$ and $\frac{X^{(l)}}{\sigma}$ converge to 0 in probability. The argument for $X^{(l)}$ is the same, with the exponent $2$ replaced by $c$. For $X^{(s)}$, we again hope to use Markov's inequality coupled with an upper bound for $\E[X^{(s)}]$ of size $o(\sqrt{\log_2 x})$, analogous to Lemma \ref{lem:smallprimes}. The argument there yields, in this case,
\[
\E[X^{(s)}] \ll \log_3 x \log_4 x + \frac{\log_4 x}{\log x} \sum_{p \leq y} \sum_{\substack{m \leq x \\ p \mid a(m) \text{ and } p \mid b(m)}} \frac{1}{m}. 
\]
Thus, the aim is to show that
\[
\sum_{p \leq y} \sum_{\substack{m \leq x \\ p \mid a(m) \text{ and } p \mid b(m)}} \frac{1}{m} = o\Bigg( \frac{\sqrt{\log_2 x}}{\log_4 x} \log{x}\Bigg).
\]

We now turn our attention to the analogue of Proposition \ref{prop:correctmoments}. Say that $m$ is $d$-compatible if for every $p \mid d$, either $p$ divides both $a(m)$ and $b(m)$ or $p$ divides neither; and $m$ is $d$-ideal if $\gcd(d, a(m)b(m)) = 1$. Equivalently, $m$ is $d$-ideal if $m$ is $d$-compatible and $\gcd(d, a(m)) = 1$. Tracing through the argument in Section 3, we see that few of the calculations depend on specific properties of $f(n)$; in fact, the analogue of Proposition \ref{prop:correctmoments} is established if
\[
\sum_{\substack{d\text{ squarefree} \\p \mid d \Rightarrow p \in \mathcal{P} \\ \omega(d)\le k}} \sum_{\substack{1 < m < x \\ \text{ $d$-compat} \\ \text{not $d$-ideal}}} \frac{(d,a(m))}{md} \ll (\log_2 x)^{O(1)}.
\]

We summarize the above discussion in the following proposition.

\begin{prop}\label{prop:ekgeneral}
Suppose $f(n)$ is an integer-valued arithmetic function with $f(n)$ nonzero when $n>1$ and $|f(n)| \leq x^{O(1)}$ for all $n \leq x$. Suppose also that for every positive integer $m$, there are $a(m)$ and $b(m)$ such that
\[
f(mP) = Pa(m) + b(m) \text{ for all primes $P$ not dividing $m$},
\]
and that 
\[
|a(m)|, |b(m)| \leq x^{O(1)}\quad\text{whenever $m\le x$}.
\]
Suppose also that $a(m), b(m)$ are nonzero whenever $m>1$. Then, if 
\begin{equation}\label{smallprimessum}
\sum_{p \leq y} \sum_{\substack{m \leq x \\ p \mid a(m) \text{ and } p \mid b(m)}} \frac{1}{m} = o\Bigg( \frac{\sqrt{\log_2 x}}{\log_4 x} \log{x}\Bigg)
\end{equation}
and
\begin{equation}\label{gcdsum}
\sum_{\substack{d\text{ squarefree} \\p \mid d \Rightarrow p \in \mathcal{P} \\ \omega(d)\le k}} \sum_{\substack{1 < m < x \\ \text{ $d$-compat} \\ \text{not $d$-ideal}}} \frac{(d,a(m))}{md} \ll (\log_2 x)^{O(1)},
\end{equation}
Theorem \ref{thm:main} is true with $f(n)$ in place of $s(n)$.
\end{prop}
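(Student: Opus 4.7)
The plan is to transplant the proof of Theorem~\ref{thm:main} wholesale, replacing $s(m)$ by $a(m)$ and $\sigma(m)$ by $b(m)$ throughout, and showing that the two places where specific identities for $s$ and $\sigma$ entered the original argument correspond exactly to the hypotheses \eqref{smallprimessum} and \eqref{gcdsum}. I would keep the probability space $\Omega$, the sets $\mathcal{P}$, the variables $X_p, Y_p, X, Y$, and the parameters $\mu,\sigma, y, z, L$ verbatim, noting that Lemma~\ref{lem:probability} concerns only the idealized variables $Y_p$ and is therefore unaffected by changing $f$. The splitting
\[ \frac{\omega(f(\cdot))-\mu}{\sigma} = \tilde{X} + \frac{X^{(s)}}{\sigma} + \frac{X^{(l)}}{\sigma} \]
remains valid provided $X^{(l)}$ now ranges over primes up to $x^c$; the bound $|f(n)|\le x^c$ forces $|X^{(l)}|\le c\log_3 x$ pointwise on $\Omega$, so $X^{(l)}/\sigma \to 0$ exactly as before.

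Next I would redo Lemma~\ref{lem:smallprimes}. Writing $n=mP$ for $n\in\Omega$ and using $f(mP) = Pa(m)+b(m)$, the condition $p\mid f(n)$ becomes $Pa(m)\equiv -b(m)\pmod{p}$. When $p\nmid a(m)$ this pins $P$ to a single residue class mod $p$ and Brun--Titchmarsh handles it as in the original, contributing $O(\log_3 x\log_4 x)$ in total after summing over $p\le y$ and $m$. When $p\mid a(m)$ the congruence is satisfiable only if $p\mid b(m)$, and this residual contribution is precisely the sum on the left of \eqref{smallprimessum}, times $\log_4 x/\log x$. Hypothesis \eqref{smallprimessum} then gives $\E[X^{(s)}]=o(\sigma)$, and Markov's inequality yields $X^{(s)}/\sigma\to 0$ in probability.

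The analogue of Proposition~\ref{prop:correctmoments} is the substantive task. The expansion of $\E[\tilde X^k]-\E[\tilde Y^k]$ into sums over squarefree moduli $d\le (\log x)^{O(1/\log_3 x)}$ is unchanged, and the identity
\[ \sum_{\substack{n\in\Omega \\ d\mid f(n)}} 1 = \sum_{1<m<x/L}\sum_{\substack{L_m<P\le x/m \\ Pa(m)+b(m)\equiv 0\!\!\pmod d}} 1 \]
splits on the three cases ``$d$-ideal'', ``$d$-compatible not $d$-ideal'', and ``not $d$-compatible'' (the last contributing nothing) exactly as in Section~3. For $d$-ideal $m$, the prime $P$ runs through a single coprime class modulo $d$, and the Bombieri--Vinogradov step goes through without modification because $d=L^{o(1)}\le (x/m)^{1/3}$. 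For $d$-compatible but not $d$-ideal $m$, Brun--Titchmarsh produces the factor $(d,a(m))/d$, and the summed contribution over all $d$ and $m$ is bounded directly by \eqref{gcdsum}, times $\log_4 x/\log x$. The remaining terms involving $|\Omega|/\phi(d)$, $1/(dy)$, and the ``not $d$-ideal'' piece with weight $1/md$ depend only on structural facts about $d\in\mathcal{P}^{\le k}$ and on the crude observation that any ``not $d$-ideal'' $m$ has some $p\mid d$ with $p\mid a(m)b(m)$; the estimates in Section~3 for these terms carry over verbatim with $s(m)\sigma(m)$ replaced by $a(m)b(m)$, once one notes $|a(m)b(m)|\le x^{O(1)}$ so that $\omega(a(m)b(m))\ll \log x$.

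The only place where specific structural properties of $s$ and $\sigma$ were actually exploited in Section~3 was inside the bound for the $d$-compatible but not $d$-ideal contribution, where the identity $\sigma(m)-s(m)=m$ yielded $(d,s(m))\mid m$ and allowed the internal sum over $m$ to be computed. This is precisely the step we are now bypassing by assumption: hypothesis \eqref{gcdsum} is an axiomatization of the conclusion of that calculation. So the main obstacle disappears by fiat, and the price is paid in the applications, where for each new $f$ one must verify \eqref{smallprimessum} and \eqref{gcdsum} by hand. Assembling the pieces gives \eqref{eq:distribution} (with the same right-hand side $(\log_2 x)^{O(1)}/\log x$), hence the analogue of Proposition~\ref{prop:correctmoments}, hence Theorem~\ref{thm:main} for $f$ in place of $s$ via the method of moments.
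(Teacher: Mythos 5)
Your proposal is correct and is essentially identical to the paper's own treatment: the paper proves Proposition~\ref{prop:ekgeneral} precisely by tracing the Section 2--3 argument with $a(m),b(m)$ in place of $s(m),\sigma(m)$, isolating \eqref{smallprimessum} as the residual term in the analogue of Lemma~\ref{lem:smallprimes} and \eqref{gcdsum} as the axiomatized replacement for the step that used $\sigma(m)-s(m)=m$. The only blemish is the slip ``$d\le(\log x)^{O(1/\log_3 x)}$'' where you mean $d=x^{O(1/\log_3 x)}$; the bound $d=L^{o(1)}\le (x/m)^{1/3}$ that you actually invoke is the correct one.
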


\subsection{The sum of prime divisors} For each positive integer $n$, let $\beta(n):=\sum_{p\mid n} p$ denote the sum of the prime divisors of $n$. If $1 < n \leq x$, then $0 < \beta(n) \le n \leq x$. If $P$ is a prime not dividing the integer $m$, then
\[
\beta(mP) = P + \beta(m).
\]
We apply Proposition \ref{prop:ekgeneral}, with $f(n) = \beta(n)$, $a(m) = 1$, and $b(m) = \beta(m)$. Since $a(m) = 1$, one quickly observes that the sums on the left-hand sides of (\ref{smallprimessum}) and (\ref{gcdsum}) are empty. Thus, Theorem \ref{thm:main} holds with $\beta(n)$ in place of $s(n)$. The same argument applies, verbatim, with $\beta(n)$ replaced by $A(n) = \sum_{p^k \parallel n} kp$, where prime factors are summed with multiplicity. For other work on the value distribution of $\beta(n)$ and $A(n)$, see \cite{hall70, hall71, hall72, alladi77, pollack14, goldfeld17}.

\subsection{A shifted divsior function.} Let $f(n) = n + \tau(n)$, where $\tau(n)$ denotes the number of divisors of $n$. Then if $n \leq x$,  $f(n) < x^{O(1)}$ trivially. If $P$ is a prime not dividing the positive integer $m$, then
\[
f(mP) = mP + \tau(mP) = Pm + 2\tau(m),
\]
so $a(m) = m$ and $b(m) = 2\tau(m)$ in this case. For $m \leq x$, the largest exponent appearing in the prime factorization of $m$, and hence the largest prime divisor of $\tau(m)$, is $\ll \log x$. This means there is no value of $m$ that is $d$-compatible but not $d$-ideal, since every prime $p \mid d$ satisfies $p > (\log x)^2$. Equation (\ref{gcdsum}) is therefore satisfied, since the sum is empty.

Equation (\ref{smallprimessum}) is handled nearly as easily. Ignoring the condition $p \mid b(m)$ in the inner sum, the left-hand side of (\ref{smallprimessum}) is at most
\begin{equation}\label{eq:ifmdiv}
\sum_{p \leq y} \sum_{\substack{m \leq x \\ p \mid m}} \frac{1}{m} \ll \log x \sum_{p \leq y} \frac{1}{p} \ll \log x \log_3 x = o\Bigg( \frac{\sqrt{\log_2 x}}{\log_4 x} \log{x}\Bigg),
\end{equation}
as desired. Thus, by Proposition \ref{prop:ekgeneral}, Theorem \ref{thm:main} holds with $f(n) = n + \tau(n)$ in place of $s(n)$.
Similar arguments apply to  $n-\tau(n)$ and $n\pm \omega(n)$. The functions $n-\tau(n)$ and $n-\omega(n)$ appear in work of Luca \cite{luca05}; for each of these two functions, he shows that the range is missing infinitely many positive integers.

\subsection{The cototient function.} Let $f(n) = n - \phi(n)$, where (as above) $\phi(n)$ is Euler's totient function. (See \cite{erdos73, BS95, FL00, GAM05, PY13, PP16} for studies of the range of $n-\phi(n)$.) Note that $0 < f(n) < n$ for $n > 1$ and, if $P$ is a prime not dividing $m$,
\[
f(mP) = Pm - \phi(Pm) = Pm - (P-1)\phi(m) = P(m - \phi(m)) + \phi(m).
\]
We apply Proposition \ref{prop:ekgeneral} with $f(n) = n - \phi(n)$, $a(m) = m - \phi(m)$, and $b(m) = \phi(m)$. We first observe that equation (\ref{smallprimessum}) can be established as in \eqref{eq:ifmdiv}, noting that if $p \mid a(m)$ and $p \mid b(m)$, then $p \mid a(m) + b(m) = m$. To show (\ref{gcdsum}), use the argument surrounding (\ref{eq:toinsertback}), replacing $s(m)$ by $a(m) = m - \phi$ and $\sigma(m)$ by $b(m) = \phi(m)$. The argument carries through with only the slightest of modifications. By Proposition \ref{prop:ekgeneral}, Theorem \ref{thm:main} holds with $f(n) = n - \phi(n)$ in place of $s(n)$.

Several other applications of our method could be given, although some  require slight changes to the framework. For example, fix an integer $a\ne 0$ and consider the shifted totient function $f(n) = \phi(n)+a$. It is not hard to prove that the hypotheses of Proposition \ref{prop:ekgeneral} are satisfied with $a(m) = \phi(m)$ and $b(m)=-\phi(m)+a$, with one exception: If $a>0$ is in the range of $\phi$, then $b(m)$ will vanish at some $m>1$. However, it is still true that $b(m)$ is nonvanishing for all $m > m_0(a)$, and one can simply run our argument with the condition  $n/P^{+}(n) > m_0(a)$ added to the definition of $\Omega$. 


\bibliographystyle{amsalpha}
\bibliography{references}

\providecommand{\bysame}{\leavevmode\hbox to3em{\hrulefill}\thinspace}
\providecommand{\MR}{\relax\ifhmode\unskip\space\fi MR }
\providecommand{\MRhref}[2]{%
  \href{http://www.ams.org/mathscinet-getitem?mr=#1}{#2}
}
\providecommand{\href}[2]{#2}
\begin{thebibliography}{{Erd}73}

\bibitem[AE77]{alladi77}
K.~Alladi and {Erd\H{o}s, P.}, \emph{On an additive arithmetic function},
  Pacific J. Math. \textbf{71} (1977), 275--294.

\bibitem[BHS05]{BHS05}
W.D. Banks, G.~Harman, and I.E. Shparlinski, \emph{Distributional properties of
  the largest prime factor}, Michigan Math. J. \textbf{53} (2005), 665--681.

\bibitem[Bil69]{billingsley69}
P.~Billingsley, \emph{On the central limit theorem for the prime divisor
  functions}, Amer. Math. Monthly \textbf{76} (1969), 132--139.

\bibitem[Bil95]{billingsley95}
\bysame, \emph{Probability and measure}, third ed., Wiley Series in Probability
  and Mathematical Statistics, John Wiley \& Sons, Inc., New York, 1995.

\bibitem[BS95]{BS95}
J.~Browkin and A.~Schinzel, \emph{On integers not of the form {$n-\phi(n)$}},
  Colloq. Math. \textbf{68} (1995), 55--58.

\bibitem[Dav33]{davenport33}
H.~Davenport, \emph{\"{U}ber numeri abundantes}, S.-{B}er. {P}reuß. {A}kad.
  {W}iss., math.-nat. {K}l. (1933), 830–837.

\bibitem[EK40]{EK40}
P.~Erd\H{o}s and M.~Kac, \emph{The {G}aussian law of errors in the theory of
  additive number theoretic functions}, Amer. J. Math. \textbf{62} (1940),
  738--742.

\bibitem[{Erd}73]{erdos73}
P.~{Erd\H{o}s}, \emph{\"{U}ber die {Z}ahlen der {F}orm {$\sigma (n)-n$} und
  {$n-\phi(n)$}}, Elem. Math. \textbf{28} (1973), 83--86.

\bibitem[FL00]{FL00}
A.~Flammenkamp and F.~Luca, \emph{Infinite families of noncototients}, Colloq.
  Math. \textbf{86} (2000), 37--41.

\bibitem[GM05]{GAM05}
A.~Grytczuk and B.~{M{\polhk{e}}dryk}, \emph{On a result of
  {F}lammenkamp-{L}uca concerning noncototient sequence}, Tsukuba J. Math.
  \textbf{29} (2005), 533--538.

\bibitem[Gol17]{goldfeld17}
D.~Goldfeld, \emph{On an additive prime divisor function of {A}lladi and
  {{E}rd\H{o}s}}, Analytic number theory, modular forms and
  {$q$}-hypergeometric series, Springer Proc. Math. Stat., vol. 221, Springer,
  Cham, 2017, pp.~297--309.

\bibitem[Hal70]{hall70}
{R.R.} Hall, \emph{On the probability that {$n$} and {$f(n)$} are relatively
  prime}, Acta Arith. \textbf{17} (1970), 169--183, corrigendum in \textbf{19}
  (1971), 203--204.

\bibitem[Hal71]{hall71}
\bysame, \emph{On the probability that {$n$} and {$f(n)$} are relatively prime.
  {II}}, Acta Arith. \textbf{19} (1971), 175--184.

\bibitem[Hal72]{hall72}
\bysame, \emph{On the probability that {$n$} and {$f(n)$} are relatively prime.
  {III}}, Acta Arith. \textbf{20} (1972), 267--289.

\bibitem[HR00]{HR00}
G.H. Hardy and S.~Ramanujan, \emph{The normal number of prime factors of a
  number {$n$} [{Q}uart. {J}. {M}ath. {\bf 48} (1917), 76--92]}, Collected
  papers of {S}rinivasa {R}amanujan, AMS Chelsea Publ., Providence, RI, 2000,
  pp.~262--275.

\bibitem[LLPSR]{LPR21}
N.~Lebowitz-Lockard, P.~Pollack, and A.~Singha~Roy, \emph{Distribution mod $p$
  of {E}uler's totient and the sum of proper divisors}, submitted.

\bibitem[LP15]{LP15}
F.~Luca and C.~Pomerance, \emph{The range of the sum-of-proper-divisors
  function}, Acta Arith. \textbf{168} (2015), 187--199.

\bibitem[Luc05]{luca05}
F.~Luca, \emph{On numbers not of the form {$n-\omega(n)$}}, Acta Math. Hungar.
  \textbf{106} (2005), 117--135.

\bibitem[Pol14]{pollack14}
P.~Pollack, \emph{Some arithmetic properties of the sum of proper divisors and
  the sum of prime divisors}, Illinois J. Math. \textbf{58} (2014), 125--147.

\bibitem[Pom18]{pomerance18}
C.~Pomerance, \emph{The first function and its iterates}, Connections in
  discrete mathematics, Cambridge Univ. Press, Cambridge, 2018, pp.~125--138.

\bibitem[PP16]{PP16}
P.~Pollack and C.~Pomerance, \emph{Some problems of {{E}rd\H{o}s} on the
  sum-of-divisors function}, Trans. Amer. Math. Soc. Ser. B \textbf{3} (2016),
  1--26.

\bibitem[PPT18]{PPT18}
P.~Pollack, C.~Pomerance, and L.~Thompson, \emph{Divisor-sum fibers},
  Mathematika \textbf{64} (2018), 330--342.

\bibitem[PY14]{PY13}
C.~Pomerance and H.-S. Yang, \emph{Variant of a theorem of {{E}rd\H{o}s} on the
  sum-of-proper-divisors function}, Math. Comp. \textbf{83} (2014), 1903--1913.

\bibitem[Tro15]{troupe15}
L.~Troupe, \emph{On the number of prime factors of values of the
  sum-of-proper-divisors function}, J. Number Theory \textbf{150} (2015),
  120--135.

\bibitem[Tro20]{troupe20}
\bysame, \emph{Divisor sums representable as the sum of two squares}, Proc.
  Amer. Math. Soc. \textbf{148} (2020), 4189--4202.

\end{thebibliography}
\end{document}